\documentclass[a4paper,11pt,oneside,bookmarks]{article}
\usepackage[utf8x]{inputenc}
\usepackage{graphicx}
\linespread{1.3}

\usepackage{wrapfig}
\usepackage[pdftex,bookmarks]{hyperref}
\hypersetup{
pdftitle={On characterization of locally $L^0$-convex topologies induced by a family of $L^0$-seminorms},
pdfauthor={José M. Zapata}
}

\usepackage{fancyhdr}
\pagestyle{fancy}
\fancyhf{}
\fancyhead[LE,RO]{\textbf{\thepage}} 
\fancyhead[LO]{\small\sffamily \nouppercase{\rightmark}}
\fancyhead[RE]{\small\sffamily \nouppercase{\leftmark}}

\usepackage[english]{babel}
\usepackage{amsmath}
\usepackage{amsthm}
\usepackage{amsfonts}

\usepackage{amscd}

\usepackage{epigraph}

\usepackage{amssymb}

\newcommand{\N}{\mathbb{N}}
\newcommand{\R}{\mathbb{R}}
\newcommand{\E}{\mathbb{E}}
\newcommand{\PP}{\mathbb{P}}

\DeclareMathOperator*{\esssup}{ess.sup\,}
\DeclareMathOperator*{\essinf}{ess.inf\,}

\DeclareMathOperator*{\interior}{int\, }
\DeclareMathOperator{\sgn}{sgn}

\newtheorem{defn}{Definition}[section]

\newtheorem{prop}{Proposition}[section]

\newtheorem{thm}{Theorem}[section]
\newtheorem{cor}{corollary}[section]
\newtheorem{example}{Example}[section]

\providecommand{\keywords}[1]{\textbf{\textit{Keywords:}} #1}

\begin{document}


\title{A random version of Mazur's lemma}

\author{José M. Zapata \thanks{Universidad de Murcia, Dpto. Matemáticas, 30100 Espinardo, Murcia, Spain, e-mail: jmzg1@um.es}}

\date{\today}
\maketitle


\begin{abstract}
The purpose of this paper is to generalize the classical Mazur's lemma from the classical convex analysis to the framework of locally $L^0$-convex modules. In this version an extra condition of countable concatenation is included. We provide a counterexample showing that this condition cannot be removed. 
\end{abstract}

\keywords{lemma's Mazur, $L^0$-modules, locally $L^0$-convex modules, gauge function, countable concatenation, countable concatenation closure.}
 

\section*{Introduction}

%
%
%

In recents years, works like \cite{key-2,key-3,key-5,key-6,key-8,key-9} have highlighted that the appropriate theoretical framework in which embed the theory of conditional risk measures is the theory of locally $L^0$-convex modules.

As \cite{key-3} propose, to carry out this study it is necessary to bring tools from classical convex analysis fitting them to this new framework. Ultimately, to create a randomized generalization of convex analysis. 

Due to difficulties deriving from working with scalars into the ring $L^0$ instead of $\R$, some obstacles must be overcome, namely, as shown in \cite{key-3} and \cite{key-8}, mainly the fact that not all the non-zero elements possess a multiplicative inverse, i. e, $L^0$ is not a field, and that $L^0$ is not endowed with a total order.

Thus, some theorems from convex analysis will remain valid in the $L^0$-convex modules, but others will require additional conditions. So in \cite{key-3} and \cite{key-8}, some notions known as countable concatenation properties are addressed.

An important tool of classical convex analysis is the Mazur's lemma, which shows that for any weakly convergent sequence in a Banach space there is a sequence of convex combinations of its members that converges strongly to the same limit. This allows in some situations changing weak topology by strong topology and vice versa working with normed spaces.

Then, the purpose of this article is to show a randomize version for $L^0$-normed modules, finding that an extracondition of countable concatenation is needed.  

This paper is structured as follow:  We give a first section of preliminaries. In the second section we study some properties of the gauge function for $L^0$-modules. And finally, the third section is devoted to the Mazur's lemma for $L^0$-modules, proving the main result and a  counterexample showing that the extra condition of countable concatenation cannot be removed.
 
\section{Preliminaries}

Given a probability space $\left(\Omega,\mathcal{F},\PP\right)$, which will be fixed for the rest of this paper, we consider $L^{0} \left(\Omega,\mathcal{F},\PP\right)$, the set of equivalence classes of  real valued $\mathcal{F}$-measurable random variables, which will be denoted simply as $L^{0}$.

It is known that the triple $\left(L^{0},+,\cdot\right)$ endowed with the partial order of the almost sure dominance is a lattice ordered ring.

We write ``$X\geq Y$`` if $\PP\left( X\geq Y \right)=1$.
Likewise, we write ``$X>Y$'', if $\PP\left( X> Y \right)=1$. 

And, given $A\in \mathcal{F}$, we write $X>Y$ (respectively,  $X \geq Y$) on $A$, if $\PP\left(X>Y \mid A\right)=1$ (respectively , if $\PP\left(X \geq Y \mid  A \right)=1$).

We also define 

\[
L_{+}^{0}:=\left\{ Y\in L^{0};\: Y\geq 0\right\} 
\]
and
\[
L_{++}^{0}:=\left\{ Y\in L^{0};\: Y>0  \right\}. 
\]

And denote by $\bar{L^{0}}$, the set of equivalence classes of  $\mathcal{F}$-measurable random variables taking values in $\bar{\R}=\R\cup\{\pm\infty\}$, and extend the partial order of the almost sure dominance to $\bar{L^{0}}$.

In A.5 of \cite{key-4} is proved the proposition below:

\begin{prop}
\label{prop: esssup}
Let $\phi$  be a subset of $L^{0}$, then
\begin{enumerate}
\item There exists $Y^{*}\in\bar{L^{0}}$ such that $Y^{*}\geq Y$ for all 
$Y\in\phi,$ and such that any other $Y'$ satisfying the same, verifies $Y'\geq Y^{*}.$
\item Suppose that  $\phi$ is directed upwards. Then there exists a increasing sequence  $Y_{1}\leq Y_{2}\leq...$ in $\phi,$ such that 
 $Y_{n}$ converges to $Y^{*}$ almost surely.
\end{enumerate}
\end{prop}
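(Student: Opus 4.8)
The plan is to reduce the problem to a family of uniformly bounded random variables and then construct the essential supremum through a maximal-expectation (exhaustion) argument. First I would fix a continuous strictly increasing bijection $g\colon\bar{\R}\to[0,1]$ (for instance one built from $\arctan$, with $g(-\infty)=0$ and $g(+\infty)=1$), and replace $\phi$ by the family $\{g(Y);\ Y\in\phi\}$ of $[0,1]$-valued random variables. Since $g$ is strictly increasing, domination statements $Y'\geq Y$ translate faithfully into $g(Y')\geq g(Y)$, so nothing is lost, while the transformed family is integrable.

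For existence (part 1), I would consider the quantity
\[
c:=\sup\left\{\E\left[\sup_{Y\in\psi}g(Y)\right];\ \psi\subseteq\phi\text{ countable}\right\}\leq 1,
\]
noting that for countable $\psi$ the pointwise supremum is measurable. Choosing countable families $\psi_{n}$ whose expectations converge to $c$ and setting $\psi:=\bigcup_{n}\psi_{n}$ (still countable), I would define $Z^{*}:=\sup_{Y\in\psi}g(Y)$ and check that $\E[Z^{*}]=c$. Then $Y^{*}:=g^{-1}(Z^{*})\in\bar{L^{0}}$ is the desired element: it is an upper bound because if some $Y\in\phi$ satisfied $\PP\left(g(Y)>Z^{*}\right)>0$, then $\psi\cup\{Y\}$ would yield a strictly larger expectation, contradicting the definition of $c$; and it is the least upper bound because any other bound $Y'$ dominates every member of $\psi$, hence $g(Y')\geq\sup_{Y\in\psi}g(Y)=Z^{*}$, whence $Y'\geq Y^{*}$.

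For part 2, I would enumerate $\psi=\{Z_{1},Z_{2},\dots\}$ and use the upward-directedness of $\phi$ to build an increasing sequence recursively: set $W_{1}:=Z_{1}$ and, given $W_{n}$, pick $W_{n+1}\in\phi$ with $W_{n+1}\geq\max(W_{n},Z_{n+1})$. Then $(W_{n})$ is increasing, lies in $\phi$, and satisfies $Z_{n}\leq W_{n}\leq Y^{*}$ for all $n$; passing to $g$, the sequence $g(W_{n})$ is increasing and squeezed between $\sup_{n}g(Z_{n})=Z^{*}$ and $g(Y^{*})=Z^{*}$, so $g(W_{n})\uparrow Z^{*}$ almost surely. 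Applying the continuous inverse $g^{-1}$ then yields $W_{n}\uparrow Y^{*}$ almost surely.

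The main obstacle I expect is not any single estimate but organizing the exhaustion argument so that one countable subfamily $\psi$ simultaneously realizes the supremum $c$ and controls all of $\phi$; the choice of the bounded transformation $g$ is precisely what makes the expectations finite and the ``$\psi\cup\{Y\}$ beats $c$'' contradiction available, and it is what lets us pass cleanly from $[0,1]$-valued suprema back to the $\bar{L^{0}}$-valued essential supremum.
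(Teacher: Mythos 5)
Your proposal is correct, and it coincides with the argument the paper itself relies on: the paper gives no proof but cites A.5 of \cite{key-4}, where the result is established by exactly your scheme — transport $\phi$ into $[0,1]$-valued variables via a strictly increasing bijection, maximize $\E\left[\sup_{Y\in\psi}g(Y)\right]$ over countable subfamilies $\psi$ by an exhaustion argument, and use upward directedness to turn the countable realizing family into an increasing sequence converging a.s.\ to $Y^{*}$. No gaps (beyond the harmless convention needed if $\phi=\emptyset$, where one takes $Y^{*}=-\infty$).
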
.

\begin{defn}
Under the conditions of the previous proposition, 
$Y^{*}$ is called essential supremum of $\phi$, and we write

\[
\esssup \phi=\underset{Y\in\phi}{\esssup Y}:=Y^{*}.
\]

The essential infimum of $\phi$ is defined as

\[
\essinf \phi=\underset{Y\in\phi}{\essinf\, Y}:=\underset{Y\in\phi}{-\esssup \left(-Y\right)}.
\]
\end{defn}


The order of the almost sure dominance also lets us define a topology on $L^{0}$. Let 

\[
B_{\varepsilon}:=\left\{ Y\in L^{0};\:\left|Y\right|\leq\varepsilon\right\} 
\]

the ball of radius $\varepsilon\in L_{++}^{0}$ centered at $0\in L^{0}$.
Then, for all $Y\in L^{0}$, $\mathcal{\mathcal{U}}_{Y}:=\left\{ Y+B_{\varepsilon};\:\varepsilon\in L_{++}^{0}\right\} $ is a neighborhood base of $Y$. Thus, it can be defined a topology on $L^{0}$ that it will be known as the topology induced by $\left|\cdot\right|$ and $L^{0}$ endowed with this topology will be denoted by $L^{0}\left[\left|\cdot\right|\right]$.

Now, we will give the central concepts of the theory of locally $L^0$-convex modules.

\begin{defn}
A topological $L^{0}$-module $E\left[\tau\right]$ is a $L^{0}$-module $E$ endowed with a topology $\tau$ such that 
\begin{enumerate}
\item $E\left[\tau\right]\times E\left[\tau\right]\longrightarrow E\left[\tau\right],\left(X,X'\right)\mapsto X+X'$
and
\item $L^{0}\left[\left|\cdot\right|\right]\times E\left[\tau\right]\longrightarrow E\left[\tau\right],\left(Y,X\right)\mapsto {Y}X$
\end{enumerate}
are continuous with the corresponding product topologies.
\end{defn}

\begin{defn}
A topology $\tau$ on a $L^{0}$-module $E$ is locally $L^{0}$-convex  
 if there is a neighborhood base of $0\in{E}$ $\mathcal{U}$ such that each $U\in\mathcal{U}$ is
\begin{enumerate}
\item $L^{0}$-convex, i.e. ${Y}X_{1}+{(1-Y)}X_{2}\in U$ for all $X_{1},X_{2}\in U$
and $Y\in L^{0}$ with $0\leq Y\leq1,$
\item $L^{0}$-absorbent, i.e. for all $X\in E$ there is a $Y\in L_{++}^{0}$
such that $X\in {Y}U,$
\item $L^{0}$-balanced, i.e. ${Y}X\in U$ for all $X\in U$ and $Y\in L^{0}$
with $\left|Y\right|\leq1.$

And, as discussed in \cite{key-13}, we add an extracondition

\item $U$ has the relative countable concatenation property.

In this case, $E\left[\tau\right]$ is called locally $L^0$-convex module. 
\end{enumerate}
\end{defn}

The notion of having the relative countable concatenation property will be recalled later (see Definition \ref{def: cc}).

\begin{defn}
A function $\left\Vert \cdot\right\Vert :E\rightarrow L_{+}^{0}$
is a $L^{0}$-seminorm on $E$ if:
\begin{enumerate}
\item $\left\Vert {Y}X\right\Vert =\left|Y\right|\left\Vert X\right\Vert $
for all $Y\in L^{0}$ y $X\in E.$
\item $\left\Vert X_{1}+X_{2}\right\Vert \leq\left\Vert X_{1}\right\Vert +\left\Vert X_{2}\right\Vert ,$
for all $X_{1},X_{2}\in E.$

If, moreover

\item $\left\Vert X\right\Vert =0$ implies $X=0,$
\end{enumerate}
then $\left\Vert \cdot\right\Vert $ is a $L^{0}$-norm on
$E$.

\end{defn}
\begin{defn}
Let $\mathcal{P}$ be a family of $L^0$-seminorms on a $L^{0}$-module
$E$. Suppose that $Q\subset\mathcal{P}$ is finite and $\varepsilon\in L_{++}^{0},$
we define 

\[
U_{Q,\varepsilon}:=\left\{ X\in E;\:\underset{\left\Vert .\right\Vert \in Q}{\sup}\left\Vert X\right\Vert \leq\varepsilon\right\} .
\]
Then for all $X\in E$, $\mathcal{\mathcal{U}}_{Q,X}:=\left\{ X+U_{\varepsilon};\:\varepsilon\in L_{++}^{0},\: Q\subset \mathcal{P}\: {finite} \right\} $
is a neighbourhood base of $X$. Thereby, we define a topology
on $E$, which it will be known as the topology induced by $\mathcal{P}$
and $E$ endowed with this topology will be denoted by $E\left[\mathcal{P}\right]$.

In addition, it is proved by the lemma 2.16 of \cite{key-3} that $E\left[\mathcal{P}\right]$ is a locally $L^0$-convex module.

Furthermore, according to \cite{key-13} a topological $L^0$-module $E\left[\tau\right]$ is a locally $L^0$-convex module if, and only if, $\tau$ is induced by a family of $L^0$-seminorms.
\end{defn}

\begin{defn}
Given a topological $L^{0}$-module $E\left[\tau\right]$, we denote by $E[\tau]^*$, or simply by $E^*$, the $L^0$-module of continuous $L^0$-linear functions $\mu : E \rightarrow L^0$.

We define
\[
\begin{array}{c}
\left\langle \cdot, \cdot\right\rangle : E \times E^*\longrightarrow L^0\\
\left\langle X, X^* \right\rangle:=X^*(X).
\end{array}
\]

For each $X^* \in E^*$ it holds that 
\[
\begin{array}{c}
q_{X^*}:E\rightarrow L^0_+ \\
q_{X^*}(X):=\left|\left\langle X, X^* \right\rangle\right|
\end{array}
\]
 is a $L^0$-seminorm. 

Now, consider the topology $\sigma(E,E^*)$ induced by the family of $L^0$-seminorms
\[
\left\{q_{X^*} ; \: X^* \in E^* \right\}.
\]
Then, $\sigma(E,E^*)$ is a locally $L^0$-convex topology, which is called the weak topology of $E$.

Likewise, for each $X\in E$ it holds that
\[
\begin{array}{c}
q_{X}:E^*\rightarrow L^0_+ \\
q_{X}(X^*):=\left|\left\langle X, X^* \right\rangle\right|
\end{array}
\]
 is a $L^0$-seminorm.

And we have the  $L^0$-convex topology $\sigma(E^*,E)$ induced by the family of $L^0$-seminorms
\[
\left\{q_{X} ; \: X \in E \right\},
\]
which is called the weak-$*$ topology of $E$.  
\end{defn}
\section{The gauge function and the countable concatenation closure.}

Let us write the notion of gauge function introduced in \cite{key-3}:  

\begin{defn}
Let $E$ be a $L^0$-module. The gauge function $p_{K}:E\rightarrow\bar{L}_{+}^{0}$ of
a set $K\subset E$ is defined by 

\[
p_{K}\left(X\right):=\essinf\left\{ Y\in L_{+}^{0};\: X\in {Y}K\right\} .
\]

In addition,  if $K$ is $L^{0}$-convex, $L^{0}$-absorbent and $L^0$-balanced, then $p_{K}$ is a $L^0$-seminorm (see \cite{key-3}, Proposition 2.23).

\end{defn}

Now we will give the notion of having the relative countable concatenation property, which is based on the notion of countable concatenation property given in \cite{key-8}. In \cite{key-3} the authors work with two others notions of countable concatenation property, one for the topology and other for the family of $L^0$-seminorms, although both properties turn out to be the same.
 
The notion introduced in \cite{key-8}, and the one given below, are related to the $L^0$-module itself rather than the topology.  

\begin{defn}
\label{def: cc}
Let $E$ be a $L^{0}$-module and $K\subset E$ a subset, and denote by $\Pi\left(\Omega,\mathcal{F}\right)$ the set of countable partitions on $\Omega$ to $\mathcal{F}$.
\begin{itemize}
\item  Given a sequence $\left\{ X_{n}\right\}_{n\in \mathbb{N}}$
in $E$ and a partition $\left\{ A_{n}\right\}_{n\in \mathbb{N}}\in\Pi(\Omega,\mathcal{F})$, 
we define the set of countable concatenations of $\{X_n\}_n$ and $\{A_n\}_n$ as
\[
cc\left(\{A_{n}\}_n,\{X_{n}\}_n\right):=\{X\in E;\: 1_{A_{n}}X_{n}=1_{A_{n}}X \textnormal{ for each } n\in \mathbb{N}\}.
\]


\item   $K$ is said to have the \emph{relative countable concatenation property}, if for each sequence $\left\{ X_{n}\right\}_{n}$
 in $K$ and each partition
$\left\{ A_{n}\right\}_{n}\in\Pi(\Omega,\mathcal{F})$ 
it holds  
\[
cc\left(\{A_{n}\}_n,\{X_{n}\}_n\right)\subset K.
\]

\item  We call the countable concatenation closure of $K$ the set defined below 
\[
\overline{K}^{cc}:=\bigcup cc\left(\{A_{n}\}_n,\{X_{n}\}_n\right)
\]

where $\left\{ A_{n}\right\}_{n}$ runs through $\Pi(\Omega,\mathcal{F})$ and $\left\{ X_{n}\right\}_{n}$
runs though the sequences in $K$. Or in another way written
\[
\overline{K}^{cc}=\{X\in E; \: \exists \{A_n\}_{n\in\N} \in \Pi(\Omega,\mathcal{F}) \textnormal{ with } 1_{A_n}X\in 1_{A_n}K \textnormal{ for all }n  \}. 
\]
\end{itemize}
\end{defn}

It is clear that $K$ has the relative countable concatenation property if, and only if, $\overline{K}^{cc}=K$.


%
%
%
%
%
%
 %

Since $L^0$ is not a totally ordered set, we need to take advantage of the notion of countable concatenation. We have the following result:

\begin{prop}
\label{prop: cccdownwardsDirected}
Let $C\subset L^0$ be bounded below (resp. above) and stable under countable concatenations,  then for each $\varepsilon\in L^0_{++}$ there exists $Y_\varepsilon \in C$ such that 
\[
\essinf C \leq Y_\varepsilon < \essinf C + \varepsilon
\]
 (resp., $\esssup C \geq Y_\varepsilon > \esssup C - \varepsilon$) 

In particular, given a $L^0$-module $E$ and $K\subset E$, which is $L^0$-convex, $L^0$-absorbent and stable under countable concatenations, we have that, for $\varepsilon\in L^0_{++}$, there exists $Y_\varepsilon \in L^0_{++}$ with $X \in {Y_\varepsilon}K$ such that $p_K(X)\leq Y_\varepsilon < p_K(X)+\varepsilon$. 
\end{prop}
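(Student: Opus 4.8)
The plan is to reduce the statement to Proposition~\ref{prop: esssup}(2) by first showing that stability under countable concatenations forces $C$ to be directed downwards, and then to manufacture the single approximating element by one more concatenation. I would treat the ``bounded below'' case; the ``bounded above'' case then follows by applying it to $-C$ and using $\essinf(-C)=-\esssup C$.

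First I would observe that $C$ is stable under finite minima. Given $Y_1,Y_2\in C$, set $A_1:=\{Y_1\le Y_2\}$, $A_2:=\{Y_1>Y_2\}$ and $A_n:=\emptyset$ for $n\ge 3$; this is a countable partition, and $\min(Y_1,Y_2)=1_{A_1}Y_1+1_{A_2}Y_2$ lies in $cc\left(\{A_n\}_n,\{Y_n\}_n\right)$, hence in $C$ by hypothesis. Thus $C$ is directed downwards, so $-C$ is directed upwards and bounded above, and Proposition~\ref{prop: esssup}(2) applied to $-C$ furnishes a decreasing sequence $\{Y_n\}_n\subset C$ with $Y_n\downarrow m:=\essinf C$ almost surely, where of course $Y_n\ge m$ for every $n$.

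To pass from the sequence to a single element I would exploit that $\varepsilon\in L^0_{++}$. Put $B_n:=\{Y_n<m+\varepsilon\}$; since $Y_n\to m$ a.s. and $\varepsilon>0$ a.s., the sets $B_n$ increase to $\Omega$ up to a null set, so $A_1:=B_1$ and $A_n:=B_n\setminus B_{n-1}$ define a countable partition of $\Omega$. The concatenation $Y_\varepsilon:=\sum_n 1_{A_n}Y_n$ belongs to $C$ by stability, and on each $A_n\subseteq B_n$ one has $Y_\varepsilon=Y_n<m+\varepsilon$, while $Y_\varepsilon\ge m$ because every $Y_n\ge m$. This yields $\essinf C\le Y_\varepsilon<\essinf C+\varepsilon$, proving the first assertion.

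For the ``in particular'' part I would apply the above to $C:=\{Y\in L^0_+;\,X\in {Y}K\}$, which satisfies $p_K(X)=\essinf C$ by definition and is nonempty and bounded below by $0$ thanks to $L^0$-absorbency. The one genuinely new point is that $C$ inherits stability under countable concatenations from $K$: given $Y_n\in C$ with $X={Y_n}Z_n$, $Z_n\in K$, and a partition $\{A_n\}_n$, the concatenation $Z:=\sum_n 1_{A_n}Z_n$ lies in $K$, and $\bigl(\sum_n 1_{A_n}Y_n\bigr)Z=\sum_n 1_{A_n}Y_nZ_n=X$ shows $\sum_n 1_{A_n}Y_n\in C$. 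Part one then supplies $\tilde Y\in C$ with $p_K(X)\le\tilde Y<p_K(X)+\varepsilon$. The last, slightly delicate, step is to upgrade $\tilde Y$ to a strictly positive element: since $Y=0$ on a set forces $X=0$ there, every member of $C$ is strictly positive on $\{X\neq 0\}$, so it only remains to fix the set $N:=\{\tilde Y=0\}\subseteq\{X=0\}$, on which $p_K(X)=0$ automatically; replacing $\tilde Y$ by the constant $\varepsilon/2$ on $N$ and leaving it unchanged on $N^{c}$ gives $Y_\varepsilon\in L^0_{++}$, and $X\in {Y_\varepsilon}K$ is checked directly by concatenating a representative of $X\in\tilde Y K$ with $0\in K$ along $N^{c},N$. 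I expect the directedness reduction and this final positivity patch to be the only nonroutine points.
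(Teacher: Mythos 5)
Your first part is correct and is essentially the paper's own argument: you derive downward directedness from a two-set concatenation, invoke Proposition \ref{prop: esssup}(2) to get a decreasing sequence $Y_n\downarrow\essinf C$, and then concatenate along the partition $A_n:=B_n\setminus B_{n-1}$, $B_n:=\{Y_n<\essinf C+\varepsilon\}$ — exactly the paper's construction. The genuine gap is in the ``in particular'' part, in your verification that $C:=\{Y\in L^0_+;\ X\in YK\}$ is stable under countable concatenations. You write that ``the concatenation $Z:=\sum_n 1_{A_n}Z_n$ lies in $K$'', but in a general $L^0$-module such an element need not exist at all: the relative countable concatenation property of $K$ only says $cc\left(\{A_n\}_n,\{Z_n\}_n\right)\subset K$, i.e.\ \emph{if} some $Z\in E$ satisfies $1_{A_n}Z=1_{A_n}Z_n$ for all $n$, then that $Z$ belongs to $K$; it does not assert $cc\left(\{A_n\}_n,\{Z_n\}_n\right)\neq\emptyset$. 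The possible emptiness of these sets is precisely why the paper defines $cc$ as a (possibly empty) subset of $E$, so invoking the infinite sum $\sum_n 1_{A_n}Z_n$ as an element of $E$ is unjustified. (Note there is no such issue in your first part, since countable concatenations always exist in $L^0$ itself.)

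The paper circumvents this by working with $C':=\{Y\in L^0_{++};\ X\in YK\}$ and exhibiting the concatenated element explicitly: with $Y:=\sum_n 1_{A_n}Y_n\in L^0_{++}$ invertible, the element $X/Y=Y^{-1}X$ certainly exists in $E$, and $1_{A_n}(X/Y)=1_{A_n}(X/Y_n)$ with $X/Y_n\in K$, so $X/Y\in cc\left(\{A_n\}_n,\{X/Y_n\}_n\right)\subset K$, i.e.\ $X\in YK$. This route also produces $Y_\varepsilon\in L^0_{++}$ directly, so your final positivity patch (which is itself sound, since $1_{N^c}Z$ does exist in $E$ as a scalar multiple and lies in $K$ by a finite concatenation with $0\in K$) becomes unnecessary; the price is that one must know $\essinf C'=p_K(X)$, which holds because $Y+\delta\in C'$ whenever $Y\in C$ and $\delta\in L^0_{++}$, using $0\in K$ (from $L^0$-absorbency) and $L^0$-convexity. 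Your argument is repairable in the same spirit — e.g.\ replace each $Y_n$ by $Y_n+1_{\{Y_n=0\}}\in C'$, concatenate there, and multiply back by $1_{\{W>0\}}$ where $W=\sum_n 1_{A_n}Y_n$ — but as written the existence step fails.
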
  
\begin{proof}

Firstly, let us see that $C$ is downwards directed. Indeed, given $Y,Y'\in C$, define $A:=(Y<Y')$. Then, since $C$ is stable under countable concatenations, $1_A Y + 1_{A^c} Y'=Y\wedge Y' \in C$. 

Therefore,  for $\varepsilon \in L^0_{++}$, there exists a decreasing sequence $\left\{Y_k\right\}_{k}$ in $C$ converging  to $\essinf C$ almost surely. 

Let us consider the sequence of sets
\[
A_0:=\emptyset \textnormal{ and } A_k:=(Y_k<\essinf C +\varepsilon)-A_{k-1} \textnormal{ for }k>0. 
\]
Then $\left\{A_k\right\}_{k\geq 0}\in\Pi(\Omega,\mathcal{F}^+)$, and we can define $Y_\varepsilon:=\sum_{k\geq 0}{1_{A_k}}Y_k$. Given that $C$ is stable under countable concatenations, it follows that $Y_\varepsilon \in C$.

For the second part, it suffices to see that if $K$ is stable under countable concatenations then $\left\{Y\in L^0_{++}; \: X\in {Y}K  \right\}$ is stable under countable concatenations as well. Indeed, given $\left\{A_k\right\}_{k}\in\Pi(\Omega,\mathcal{F})$ and $\left\{Y_k\right\}_{k} \subset L^0_{++}$ such that $X\in {Y_k}K$ for each $k\in\N$, let us take $Y:={\sum}_{n\in\mathbb{N}}Y_{n}1_{A_{n}}\in L_{++}^{0}$. Then we have that $X/Y\in cc\left(\{A_{k}\}_k,\left\{X/Y_{k}\right\}_k\right)$ and $X/Y_{k}\in K$.

Since $K$ is stable under countable concatenations, we conclude that  $X/Y\in K$, and the proof is complete.  
\end{proof}

Now, we have the following proposition

\begin{prop}
\label{closureGauge}
Let $E\left[\tau\right]$ a topological $L^0$-module and 
$C\subset E$ a $L^{0}$-convex and $L^{0}$-absorbent subset.  Then the following are equivalent 
\begin{enumerate}
	\item $p_C:E\rightarrow L^0$ is continuous. 
	\item $0\in \interior{C}$
\end{enumerate}

In this case, if in addition $C$ has the relative countable concatenation property
\[
\overline{C}=\left\{X \in E ; \: p_C(X)\leq 1  \right\}.
\]
\end{prop}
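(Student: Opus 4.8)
The plan is to mirror the classical theory of Minkowski gauges, with essentially all of the difficulty concentrated in a single inclusion that forces the use of countable concatenation. First I would record the elementary facts valid for \emph{any} $L^0$-convex, $L^0$-absorbent $C$, with no concatenation hypothesis: since $C$ absorbs $0$ one gets $0\in C$ and hence $p_C(0)=0$; the gauge is positively homogeneous, $p_C(\lambda X)=\lambda p_C(X)$ for $\lambda\in L^0_{++}$, and subadditive, $p_C(X+X')\le p_C(X)+p_C(X')$, both coming from $L^0$-convexity exactly as in Proposition~2.23 of \cite{key-3}; and one always has the easy inclusion $C\subseteq\{X:p_C(X)\le 1\}$, because $X\in 1\cdot C$ forces $1$ to belong to the set whose essential infimum defines $p_C(X)$.

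For (2)$\Rightarrow$(1) I would use local $L^0$-convexity to pick a basic neighbourhood $U$ of $0$ that is $L^0$-convex and $L^0$-balanced with $U\subseteq C$. Positive homogeneity together with the easy inclusion yields $p_C(Z)\le\varepsilon$ whenever $Z\in\varepsilon U$, and balancedness gives the same bound for $-Z$; feeding this into $p_C(X)-p_C(X')\le p_C(X-X')$ and its symmetric companion $p_C(X')-p_C(X)\le p_C(X'-X)$ gives $|p_C(X)-p_C(X')|\le\varepsilon$ as soon as $X-X'\in\varepsilon U$, which is continuity. This direction needs no concatenation.

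For (1)$\Rightarrow$(2), as well as for the closure formula, everything rests on the reverse inclusion $\{X:p_C(X)<1\}\subseteq C$. Given $p_C(X)<1$, the set $S_X:=\{Y\in L^0_+:X\in YC\}$ is downward directed (here $L^0$-convexity provides stability under finite concatenation, so $Y\wedge Y'\in S_X$), and applying Proposition~\ref{prop: esssup} to $-S_X$ produces a decreasing sequence in $S_X$ converging almost surely to $p_C(X)<1$. The delicate point is to exhibit a \emph{single} $Y\in S_X$ with $Y<1$: patching this sequence along the partition on which it first drops below $1$ produces such a $Y$, but only after invoking stability of $S_X$ under countable concatenation, which is in turn the stability of $C$ — this is exactly the content of Proposition~\ref{prop: cccdownwardsDirected}. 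With such a $Y$ and a representation $X=Yc$, $c\in C$, writing $X=Yc+(1-Y)\,0$ and using $0\le Y\le 1$ with $0\in C$ and $L^0$-convexity gives $X\in C$. Since $\{Y:Y<1\}$ is open in $L^0[\left|\cdot\right|]$, the set $\{p_C<1\}$ is then an open neighbourhood of $0$ contained in $C$, so $0\in\interior C$.

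Finally, assuming the relative countable concatenation property, I would prove the closure identity by double inclusion. Because $p_C$ is continuous and $\{Y:Y\le 1\}$ is closed in $L^0[\left|\cdot\right|]$, the set $\{p_C\le 1\}$ is closed and contains $C$, whence $\overline{C}\subseteq\{p_C\le 1\}$. Conversely, for $p_C(X)\le 1$ and any $\lambda\in L^0$ with $0<\lambda<1$ one has $p_C(\lambda X)=\lambda p_C(X)\le\lambda<1$, so $\lambda X\in\{p_C<1\}\subseteq C$; letting $\lambda\uparrow 1$ and using continuity of scalar multiplication gives $\lambda X\to X$, hence $X\in\overline{C}$. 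I expect the main obstacle throughout to be precisely the inclusion $\{p_C<1\}\subseteq C$: over a ring that is not totally ordered, the strict inequality $p_C(X)<1$ no longer delivers a single admissible scalar below $1$, and manufacturing one is possible only through the countable-concatenation approximation of Proposition~\ref{prop: cccdownwardsDirected} — which is exactly why the concatenation hypothesis is indispensable for the closure formula.
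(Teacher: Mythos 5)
Your proof of the closure identity is, modulo cosmetic differences, the paper's own: both hinge on Proposition \ref{prop: cccdownwardsDirected}, the paper approximating $X$ from outside by the net $X/Y_\varepsilon$ with $1\le Y_\varepsilon<1+\varepsilon$ and $X\in Y_\varepsilon C$, while you approximate from inside by $\lambda X$, $0<\lambda<1$, after establishing $\{X;\,p_C(X)<1\}\subseteq C$. The genuine problem is your (1)$\Rightarrow$(2). It rests on that same inclusion $\{p_C<1\}\subseteq C$, which you obtain from Proposition \ref{prop: cccdownwardsDirected} and which therefore requires $C$ to be stable under countable concatenations --- but in the statement this hypothesis is imposed only for the closure formula, while the equivalence (1)$\Leftrightarrow$(2) is asserted for an arbitrary $L^0$-convex, $L^0$-absorbent $C$. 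Moreover the inclusion is not merely unproved but false in that generality: the paper's own example following this proposition exhibits an $L^0$-convex, $L^0$-absorbent $U\subseteq L^0$ with $p_U\equiv 0$, so $\{p_U<1\}=E\not\subseteq U$, even though $p_U$ is continuous and $0\in\interior{U}$ holds anyway (trivially, $B_1\subseteq U$). So your argument proves (1)$\Rightarrow$(2) only for concatenation-stable $C$, and any proof of the implication as stated must avoid the sublevel-set inclusion altogether. In fairness, the paper dispatches the equivalence with ``the proof is exactly the same as the real case'', and the classical proof of (1)$\Rightarrow$(2) uses precisely this inclusion, with the total order of $\R$ supplying a single admissible scalar below $1$; your write-up makes explicit a point the paper glosses over, but as a proof of the statement as written it has a gap at this implication.

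A smaller, repairable slip: in (2)$\Rightarrow$(1) you invoke a locally $L^0$-convex neighbourhood base, but $E[\tau]$ is only assumed to be a topological $L^0$-module, so no such base is available. None is needed: since $0\in\interior{C}$ and negation is a homeomorphism, $U:=C\cap(-C)$ is an $L^0$-convex, $L^0$-balanced neighbourhood of $0$ (for $|Y|\le 1$ and $X\in U$, setting $A:=(Y\ge 0)$ one has $YX=|Y|\bigl(1_A X+1_{A^c}(-X)\bigr)$, an $L^0$-convex combination of $X$, $-X$ and $0$), and $\varepsilon U$ is again a neighbourhood of $0$ because multiplication by a fixed $\varepsilon\in L^0_{++}$ is a homeomorphism of $E$. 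With this $U$ your estimate $p_C(Z)\le\varepsilon$ and $p_C(-Z)\le\varepsilon$ for $Z\in\varepsilon U$, fed into the two subadditivity inequalities, gives $|p_C(X)-p_C(X')|\le\varepsilon$ whenever $X-X'\in\varepsilon U$, so this direction does go through without any concatenation hypothesis, as you claimed.
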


\begin{proof}

The proof is exactly the same as the real case.

For the equality.

``$\subseteq$'': It is obtained from the continuity.

``$\supseteq$'': Let $X\in E$ be satisfying $p_C(X)\leq 1$. By proposition \ref{prop: cccdownwardsDirected}, we have that for every $\varepsilon \in L^0_{++}$ there exists $Y_\varepsilon \in L^0_{++}$ such that
\[
1 \leq Y_\varepsilon < 1+\varepsilon,
\]
with $X \in Y_\varepsilon C$.

Then, $\left\{X/Y_\varepsilon \right\}_{\varepsilon \in L^0_{++}}$ is a net in $C$ converging to $X$. Thus $X\in \overline{C}$. And the proof is complete.
\end{proof}

Let us see an example showing that for the equality proved in the last proposition it is necessary to take $C$ with the relative countable concatenation property. 
\begin{example}
Given  $\Omega=(0,1)$, $\mathcal{E}=\mathcal{B}(\Omega)$ the $\sigma$-algebra of Borel, $A_{n}=[\frac{1}{2^n},\frac{1}{2^{n-1}})$ with $n\in\mathbb{N}$, $\PP$ the Lebesgue measure and $E:=L^0(\mathcal{E})$ endowed with $|\cdot|$.

We define the set 

\[
U:=\left\{ Y\in L^{0};\:\exists\, I\subset\mathbb{N}\textnormal{ finite, }\left|Y1_{A_{i}}\right|\leq 1\:\forall\, i\in\mathbb{N}-I\right\}.
\]

Then, it is easily shown that $U$ is
$L^{0}$-convex, $L^{0}$-absorbent and $\overline{U}=U$.

Nevertheless, it can be proved that $p_{U}\left(X\right)=0$ for all $X\in{L^0}$, and therefore
\[
\left\{ X\in E;\: p_{U}(X)\leq 1\right\} = E.
\]

\end{example}

\section{A random version of the Mazur's lemma}

Finally, let us turn to prove a version for $L^0$-modules of the classical Mazur's lemma.

We need a new notion:

\begin{defn}
Let $E$ be a $L^0$-module, we say that the sum of $E$ preserves the relative countable concatenation property, if for every $L,M$ subsets of $E$ with the relative countable concatenation property it holds that the sum of both $L+M$ has the relative countable concatenation property. 
\end{defn} 

%

\begin{thm}
\label{thm: Mazur}
[Randomized version of the Mazur lemma]
Let $(E,\left\Vert \cdot\right\Vert)$ be a $L^0$-normed module whose sum preserves the relative countable concatenation property, and let $\{X_\gamma\}_{\gamma\in \Gamma}$ be a net in $E$, which converges weakly to $X\in E$. Then, for any $\varepsilon\in L^0_{++}$, there exists
\[
Z_{\varepsilon} \in \overline{co}^{cc}_{L^0}\{X_\gamma ; \:\gamma\in \Gamma \} \textnormal{ such that }\left\Vert X - Z_\varepsilon \right\Vert \leq \varepsilon.
\] 

\end{thm}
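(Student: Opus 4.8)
The plan is to reduce the statement to showing that the essential infimum of the $L^0$-distances from $X$ to the set $C:=\overline{co}^{cc}_{L^0}\{X_\gamma ; \gamma\in\Gamma\}$ vanishes, and then to exclude a positive distance by separating $X$ from $C$ and contradicting the weak convergence. First I would set $D:=\{\left\Vert X-Z\right\Vert ; Z\in C\}\subset L^0_+$. Since $C$ is $L^0$-convex and stable under countable concatenations and $\left\Vert \cdot\right\Vert$ is an $L^0$-norm, the gluing identity $\left\Vert 1_A U+1_{A^c}V\right\Vert =1_A\left\Vert U\right\Vert +1_{A^c}\left\Vert V\right\Vert$ shows that $D$ is itself stable under countable concatenations and bounded below by $0$. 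By Proposition \ref{prop: cccdownwardsDirected} applied to $D$, for every $\varepsilon\in L^0_{++}$ there is $Z_\varepsilon\in C$ with $\left\Vert X-Z_\varepsilon\right\Vert <\essinf D+\varepsilon$, so the theorem follows as soon as I show $\essinf D=0$.

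To prove $\essinf D=0$ I argue by contradiction. Put $Y^*:=\essinf D$ and $A:=(Y^*>0)$, and suppose $\PP(A)>0$. Then $\left\Vert X-Z\right\Vert \geq Y^*$ for every $Z\in C$, and the same bound passes to the $\tau$-closure $\overline{C}$ (which is again $L^0$-convex and countably-concatenation-stable), whence $\left\Vert 1_A(X-Z)\right\Vert =1_A\left\Vert X-Z\right\Vert \geq 1_A Y^*>0$ on $A$ for all $Z\in\overline{C}$; in particular $1_A X\notin 1_A\overline{C}$. I would then invoke the $L^0$-Hahn--Banach separation theorem available in locally $L^0$-convex modules for sets enjoying the countable concatenation property (cf. \cite{key-3,key-13}), which rests on the gauge machinery of Section 2 and in particular on Proposition \ref{closureGauge}: there exist $\mu\in E^*$ and $Y_0\in L^0$ with
\[
\langle X, \mu\rangle > Y_0 \ge \langle Z, \mu\rangle \quad \text{on } A,\ \text{for all } Z\in\overline{C}.
\]
Since each $X_\gamma\in C\subset\overline{C}$, this gives $\langle X_\gamma,\mu\rangle\leq Y_0<\langle X,\mu\rangle$ on $A$ for every $\gamma$.

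Finally I would bring in the weak convergence. By definition of $\sigma(E,E^*)$, the convergence of $\{X_\gamma\}$ to $X$ forces $\left|\langle X_\gamma-X,\mu\rangle\right|\to 0$ in $L^0[\left|\cdot\right|]$, hence $1_A\langle X_\gamma,\mu\rangle\to 1_A\langle X,\mu\rangle$ by continuity of multiplication by $1_A$. The order interval $\{W\in L^0 ; 1_A W\leq 1_A Y_0\}$ is closed in $L^0[\left|\cdot\right|]$ (if $W_\gamma\to W$ with $1_A W_\gamma\leq 1_A Y_0$ then $(1_A W-1_A Y_0)^+\leq\left|1_A W-1_A W_\gamma\right|\to 0$), and $1_A\langle X_\gamma,\mu\rangle\leq 1_A Y_0$ for all $\gamma$, so in the limit $\langle X,\mu\rangle\leq Y_0$ on $A$, contradicting the strict inequality above. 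Thus $\PP(A)=0$, i.e. $\essinf D=0$, and the proof is complete.

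The main obstacle is the separation step, and this is precisely where the hypothesis that the sum preserves the relative countable concatenation property enters. The geometric separation is obtained by thickening $\overline{C}$ with a ball and applying the gauge characterization of Proposition \ref{closureGauge}, which requires the resulting Minkowski sum to retain the countable concatenation property; this is guaranteed exactly by the standing assumption on the sum. As the Example following Proposition \ref{closureGauge} already shows at the level of gauges, without countable concatenation the gauge of such a set degenerates and separation breaks down, which is why the extra condition cannot be removed. Two routine but necessary checks along the way are that $C$, being the $cc$-closure of the $L^0$-convex hull of $\{X_\gamma\}$, is genuinely $L^0$-convex and countably-concatenation-stable, and that the statement ``$X$ lies outside $C$ on $A$'' is upgraded to a \emph{strict} separation valid on all of $A$ rather than merely almost everywhere.
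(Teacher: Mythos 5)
Your second half—thicken with a ball, apply the gauge machinery of Proposition \ref{closureGauge}, extend by Hahn--Banach, and contradict weak convergence in $L^0[\left|\cdot\right|]$—is essentially the paper's own argument, repackaged as an appeal to a separation theorem, and that part is sound in spirit. The genuine gap is in your opening reduction: the claim that $D=\{\left\Vert X-Z\right\Vert ;\, Z\in C\}$ is stable under countable concatenations. The gluing identity for the norm is correct, but for the concatenated scalar $\sum_n 1_{A_n}\left\Vert X-Z_n\right\Vert$ (which always exists in $L^0$) to belong to $D$ you must produce a single $Z\in C$ with $1_{A_n}Z=1_{A_n}Z_n$ for all $n$. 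The \emph{relative} countable concatenation property of $C$ only says that such a $Z$, \emph{if it exists in $E$}, lies in $C$; it does not assert existence, and neither does the hypothesis that sums preserve the relative property. A general $L^0$-normed module need not admit countable concatenations, so Proposition \ref{prop: cccdownwardsDirected} does not apply to $D$. The same defect infects your parenthetical claim that the closure $\overline{C}$ is concatenation-stable (that one is avoidable: thicken $C$ itself rather than $\overline{C}$, which is what the paper does with $M_1$). Note the contrast with the paper's only invocation of Proposition \ref{prop: cccdownwardsDirected}, inside Proposition \ref{closureGauge}: there the set is $\{Y\in L^0_{++};\, X\in YK\}$ and the concatenated element is realized explicitly as $X/Y$, a module operation on the fixed element $X$, so no existence problem arises.

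Moreover, the gap is not cosmetic, because $\essinf D=0$ is strictly weaker than the theorem. In the paper's closing counterexample, the essential infimum of the distances from $0$ to the plain $L^0$-convex hull is already $0$ (average $N$ distinct frequencies on each atom and let $N\to\infty$), yet no single element of that hull satisfies $\left\Vert Y\right\Vert\leq\varepsilon$ for $\varepsilon=\sum_k 2^{-k}1_{A_k}$: producing one global $Z_\varepsilon$ is precisely the content of the theorem, and precisely what your application of Proposition \ref{prop: cccdownwardsDirected} to $D$ smuggles in. Your argument becomes correct if you add the assumption that $E$ has the (non-relative) countable concatenation property, i.e.\ that concatenations exist in $E$; the paper's route—fix $\varepsilon$, suppose every $Z\in M_1$ satisfies $\left\Vert X-Z\right\Vert>\varepsilon$ on some $A_Z\in\mathcal{F}^+$, and separate $X$ from the thickened set $M=\bigcup_{Z\in M_1}(Z+B_{\varepsilon/2})$—avoids the issue entirely, since all its concatenation arguments use only the relative property of sets that already live in $E$. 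Two minor points: you should translate (as the paper does) so that $0$ lies in the set before forming gauges, and the ``upgrade to separation on all of $A$'' you flag as a needed check is actually unnecessary—a positive-measure subset of $A$ already yields the contradiction with weak convergence.
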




\begin{proof}
Define $M_1 := \overline{co}^{cc}_{L^0} \{X_\gamma ; \: \gamma\in \Gamma \}.$ We may assume that $0\in M_1$, by replacing $X$ by $X-X_{\gamma_0}$ and $X_\gamma$ by $X_\gamma-X_{\gamma_0}$ for some $\gamma_0\in \Gamma$ fixed and all $\gamma\in\Gamma$.

By way of contradiction, suppose that for every $Z \in M_1$ there exists $A_Z \in \mathcal{F}^+$  such that $\left\Vert X - Z \right\Vert > \varepsilon$ on $A_Z$.

Denote $B_\frac{\varepsilon}{2}:=\{ X\in E; \Vert X \Vert\leq \frac{\varepsilon}{2}\}$, and define $M:=\underset{Z\in M_1}\bigcup{Z + B_\frac{\varepsilon}{2}}.$

Then $M$ is a $L^0$-convex $L^0$-absorbent neighbourhood of $0\in E$, which has the relative countable concatenation property (this is because $M_1$ and $B_\frac{\varepsilon}{2}$ has the relative c. c. property and the sum of $E$ preserves the relative c. c. property). Besides, for every $Z \in M$ there exists $C_Z \in \mathcal{F}^+$ with $\Vert X-Z\Vert\geq\frac{\varepsilon}{2}$ on $C_Z$. So that $X \notin \overline{M}$.


Thus, by Proposition \ref{closureGauge} we have that there exists $C\in \mathcal{F}^+$ such that
\begin{equation}
\label{ineqII}
\begin{array}{cc}
p_M (X) > 1 & \textnormal{ on } C,
\end{array}
\end{equation}

where $p_M$ is the guage function of $M$.

Further, given $Y,Y' \in L^0$ with $1_C Y X =1_C Y' X$, it holds that $Y=Y'$ on $C$. 

Indeed, define $A=(Y-Y'\geq 0)$
\[
1_C |Y-Y'| p_M (X)\leq p_M (1_C |Y-Y'|X)=p_M ((1_{A}-1_{A^c})1_C (Y-Y')X)=p_M(0)=0.
\]
In view of \ref{ineqII}, we conclude that $Y=Y'$ on $C$.  

Then, we can define the following $L^0$-linear application 
\[
\begin{array}{cc}
\mu_0 : & span_{L^0} \{X\}\longrightarrow L^0\\
& \mu_0 (Y X):=Y 1_{C} p_M(X).
\end{array}
\]

In addition, we have that
\[
\begin{array}{cc}
\mu_0 (Z) \leq p_M (Z) & \textnormal{ for all } Z\in span_{L^0}\{X\}.
\end{array}
\] 

Thus, by the Hahn-Banach extension theorem for $L^0$-modules (\cite{key-3}, theorem 2.14), there exists a $L^0$-linear extension $\mu$ of $\mu_0$ defined on $E$ such that 
\[
\begin{array}{cc}
\mu (Z) \leq p_M (Z) & \textnormal{ for all } Z\in E.
\end{array}
\]
 
Since $M$ is a neighborhood of $0\in E$, by Proposition \ref{closureGauge}, the gauge function $p_M$ is continuous on $E$. Hence $\mu$ is a continuous
$L^0$-linear function defined on $E$.

Furthermore, we have that
\[
\underset{Z \in M_1} \esssup \mu (Z) \leq \underset{Z \in M} \esssup \mu (Z) \leq
\]
\[
\begin{array}{cc}
\leq \underset{Z \in M} \esssup p_M (Z) \leq 1 < p_M (X)=\mu(X)  & \textnormal{ on }C. 
\end{array}
\]
Therefore, $X$ cannot be a weak accumulation point of $M_1$ contrary to the hypothesis of $X_\gamma$ converging weakly to $X$.
\end{proof}

We have the following corollaries:

\begin{cor}
\label{prop: closedw}
Let $(E,\left\Vert \cdot\right\Vert)$ be a $L^0$-normed module whose sum preserves the relative countable concatenation property, and let $K\subset E$  be $L^0$-convex and with the relative countable concatenation property, we have that the closure in norm coincides with the closure in the weak  topology, i.e. $\overline{K}^{\Vert \cdot \Vert} = \overline{K}^{\sigma(E,E^*)}$.
\end{cor}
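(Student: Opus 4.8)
The plan is to prove the corollary as a consequence of the randomized Mazur lemma (Theorem \ref{thm: Mazur}), following the classical pattern that a convex set is weakly closed if and only if it is strongly closed. Since the weak topology $\sigma(E,E^*)$ is coarser than the norm topology, every weakly closed set is norm-closed, so the inclusion $\overline{K}^{\Vert\cdot\Vert}\subseteq\overline{K}^{\sigma(E,E^*)}$ is immediate and requires no hypotheses on $K$. The content lies in the reverse inclusion $\overline{K}^{\sigma(E,E^*)}\subseteq\overline{K}^{\Vert\cdot\Vert}$, and this is where convexity, the relative countable concatenation property of $K$, and the assumption that the sum of $E$ preserves this property will all be used.

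First I would fix $X\in\overline{K}^{\sigma(E,E^*)}$ and aim to show $X\in\overline{K}^{\Vert\cdot\Vert}$. Because $X$ lies in the weak closure of $K$, there is a net $\{X_\gamma\}_{\gamma\in\Gamma}$ in $K$ converging weakly to $X$. Applying Theorem \ref{thm: Mazur} to this net, for every $\varepsilon\in L^0_{++}$ I obtain an element
\[
Z_\varepsilon\in\overline{co}^{cc}_{L^0}\{X_\gamma;\:\gamma\in\Gamma\}\quad\text{with}\quad\Vert X-Z_\varepsilon\Vert\leq\varepsilon.
\]
The key observation is then that $\overline{co}^{cc}_{L^0}\{X_\gamma;\:\gamma\in\Gamma\}\subseteq K$. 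This holds precisely because $K$ is $L^0$-convex and has the relative countable concatenation property: the $L^0$-convex hull of a subset of $K$ stays inside $K$ by $L^0$-convexity, and passing to the countable concatenation closure keeps us inside $K$ since $\overline{K}^{cc}=K$ by the remark following Definition \ref{def: cc}. Hence each $Z_\varepsilon$ belongs to $K$, and $\{Z_\varepsilon\}_{\varepsilon\in L^0_{++}}$ is a net in $K$ converging to $X$ in norm, which gives $X\in\overline{K}^{\Vert\cdot\Vert}$.

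The main obstacle I anticipate is justifying the inclusion $\overline{co}^{cc}_{L^0}\{X_\gamma\}\subseteq K$ cleanly, that is, verifying that the $L^0$-convex hull operation followed by the countable concatenation closure does not escape $K$. The $L^0$-convexity of $K$ handles finite $L^0$-convex combinations directly, but one must check that the countable concatenation closure of a subset of $K$ remains in $K$; this is exactly the content of the relative countable concatenation property together with the characterization $\overline{K}^{cc}=K$. A secondary point to make precise is that $\{Z_\varepsilon\}_{\varepsilon\in L^0_{++}}$ indeed converges to $X$ in the norm topology: since $L^0_{++}$ directed downward serves as an index set and $\Vert X-Z_\varepsilon\Vert\leq\varepsilon$, the net converges to $X$ in $(E,\Vert\cdot\Vert)$, so that $X$ lies in the norm closure of $K$. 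Once these inclusions are established, combining them with the trivial inclusion yields the claimed equality $\overline{K}^{\Vert\cdot\Vert}=\overline{K}^{\sigma(E,E^*)}$.
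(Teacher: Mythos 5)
Your proposal is correct and is exactly the deduction the paper intends: the corollary is stated as an immediate consequence of Theorem \ref{thm: Mazur} (the paper offers no written proof), and your argument---the trivial inclusion from the weak topology being coarser, plus applying the randomized Mazur lemma to a net in $K$ weakly converging to $X$ and noting $\overline{co}^{cc}_{L^0}\{X_\gamma;\:\gamma\in\Gamma\}\subseteq\overline{K}^{cc}=K$ by $L^0$-convexity and the relative countable concatenation property---is precisely the intended reasoning, with all hypotheses used where they should be.
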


Then, from now on, for any subset $K$ which is $L^0$-convex and with the relative countable concatenation property, we will denote the topological closure by $\overline{K}$ without specifying whether the topology is either weak or strong.

Let us recall some notions:

\begin{defn}
Let $E[\tau]$ be a topological $L^0$-module. A function $f:E\rightarrow\bar{L}^{0}$ is called proper if $f(E)\cap L^0 \neq \emptyset$ and $f>-\infty$. It is said to be $L^0$-convex if $f (Y X_1 + (1 − Y)X_2) \leq Y f (X_1) + (1 − Y)f (X_2)$ for all $X_1,X_2 \in E$ and
$Y \in L^0$ with $0 \leq Y \leq 1$. It said to have the local property if $1_A f(X)=1_A f(1_A X)$ for $A\in\mathcal{F}^+$ and $X\in E$. Finally, $f$ is called lower semicontinuous if the level set $V\left(Y_{0}\right)=\left\{ X\in E;\: f\left(X\right)\leq Y_{0}\right\}$ is closed for all $Y_0\in L^0$.
\end{defn}

\begin{cor}
\label{cor: lowerSemCont}
Let $(E,\left\Vert \cdot\right\Vert)$ be a $L^0$-normed module whose sum preserves the relative countable concatenation property, and let $f:E\rightarrow \bar{L}^0$  be a proper $L^0$-convex function. If $f$ is continuous, then $f$ is lower semicontinuous with the weak topology. 
\end{cor}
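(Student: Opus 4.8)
The plan is to reduce the statement to Corollary \ref{prop: closedw} by showing that every level set of $f$ is simultaneously $L^0$-convex, closed in the norm topology, and endowed with the relative countable concatenation property; once this is done, the level sets are weakly closed, which is exactly weak lower semicontinuity. So I would fix $Y_0\in L^0$ and work with $V(Y_0)=\{X\in E:\, f(X)\leq Y_0\}$ throughout (the case $V(Y_0)=\emptyset$ being trivial, as the empty set is weakly closed).

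The $L^0$-convexity of $V(Y_0)$ is immediate from that of $f$: if $f(X_1),f(X_2)\leq Y_0$ and $0\leq Y\leq 1$, then $f(YX_1+(1-Y)X_2)\leq Yf(X_1)+(1-Y)f(X_2)\leq Y_0$. The norm-closedness of $V(Y_0)$ is where continuity enters: if a net $X_\alpha\in V(Y_0)$ converges in norm to $X$, then $f(X_\alpha)\to f(X)$ in $\bar{L}^0$, and since the order cone $\{Z\in\bar{L}^0:\, Z\leq Y_0\}$ is closed and $f(X_\alpha)\leq Y_0$, we obtain $f(X)\leq Y_0$, i.e. $X\in V(Y_0)$.

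The delicate step is the relative countable concatenation property, and for it I would first record that any $L^0$-convex $f$ has the local property. Indeed, applying $L^0$-convexity with coefficient $1_A$ to the pairs $(X,0)$ and $(1_AX,X)$ and multiplying each resulting inequality by $1_A$ yields both $1_Af(1_AX)\leq 1_Af(X)$ and $1_Af(X)\leq 1_Af(1_AX)$, whence $1_Af(X)=1_Af(1_AX)$. Granting this, let $\{X_n\}_n\subset V(Y_0)$, $\{A_n\}_n\in\Pi(\Omega,\mathcal{F})$ and $X\in cc(\{A_n\}_n,\{X_n\}_n)$, so that $1_{A_n}X=1_{A_n}X_n$ for every $n$. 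The local property gives $1_{A_n}f(X)=1_{A_n}f(1_{A_n}X)=1_{A_n}f(1_{A_n}X_n)=1_{A_n}f(X_n)\leq 1_{A_n}Y_0$, and summing over $n$ (using $\sum_n 1_{A_n}=1$) produces $f(X)\leq Y_0$, i.e. $X\in V(Y_0)$.

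Finally, since $V(Y_0)$ is $L^0$-convex and has the relative countable concatenation property, Corollary \ref{prop: closedw} gives $\overline{V(Y_0)}^{\Vert\cdot\Vert}=\overline{V(Y_0)}^{\sigma(E,E^*)}$; as $V(Y_0)$ is norm-closed this forces it to be weakly closed, and since $Y_0$ was arbitrary, $f$ is lower semicontinuous for the weak topology. I expect the main obstacle to be the derivation of the local property and its use to transfer the countable concatenation property to the level sets; the norm-closedness and the appeal to Corollary \ref{prop: closedw} are routine. One point to treat with care is the $\bar{L}^0$-valued (possibly $+\infty$) nature of $f$, but within a level set the relevant values lie in $L^0$, so all the manipulations above are legitimate.
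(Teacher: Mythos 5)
Your proposal is correct and follows essentially the same route as the paper: reduce to Corollary \ref{prop: closedw} by showing each level set $V(Y_0)$ is $L^0$-convex, norm-closed, and has the relative countable concatenation property, the last via the local property of $L^0$-convex functions. The only difference is that you prove the local property directly (a valid argument), where the paper simply cites \cite[Theorem 3.2]{key-8}, and you spell out the countable-concatenation verification that the paper leaves implicit.
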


\begin{proof}
It is a known fact that, if $f$ is $L^0$-convex, then it has the local property (see \cite[Theorem 3.2]{key-8}).

Being $f$ $L^0$-convex and with the local property, we have that $V(Y)$ is $L^0$-convex and has the relative countable concatenation property.

Since $f$ is continuous, we have that $V(Y)$ is closed, and due to  Corollary \ref{prop: closedw}, it is weakly closed as well.

\end{proof}

Finally, we will provide an example showing that in the version of Mazur's lemma proved, rather than just take $X_{\varepsilon}$ into the $L^0$-convex hull, we must take it into the countable concatenation closure of the $L^0$-convex hull. Namely, we shall give an example of a net weakly convergent to some limit, which is not a cluster point of the $L^0$-convex hull of that net.   

\begin{example}
Given  $\Omega=(0,1)$, $\mathcal{E}=\mathcal{B}(\Omega)$ the $\sigma$-algebra of Borel, $A_{n}=[\frac{1}{2^n},\frac{1}{2^{n-1}})$ with $n\in\mathbb{N}$ and $\PP$ the Lebesgue measure. 
~\\

We define
\[
\mathcal{F}:=\sigma(\left\{A_n ; \: n\in \N \right\})
\]
the $\sigma$-algebra generated by $\left\{A_n ; \: n\in \N \right\}$.
~\\

Then, we take the $L^0(\mathcal{F})$-module
\[
L_{\mathcal{F}}^{2}\left(\mathcal{E}\right):=L^{0}\left(\mathcal{F}\right)L^{2}\left(\mathcal{E}\right)
\] 
and the $L^0(\mathcal{F})$-seminorm
\[
\left\Vert X \mid \mathcal{F} \right\Vert_{2}:=\E\left[\left|X\right|^{2}|\mathcal{F}\right]^{1/2} 
\]
as we can see defined in \cite{key-3}.
~\\

Then the following holds
\begin{equation}
\label{eq1}
L^{0}\left(\mathcal{F}\right)=\left\{\sum_{n\in \N} \alpha_n 1_{A_n} ; \: \alpha_n \in \R \right\}
\end{equation}
\begin{equation}
\label{eq2}
L_{\mathcal{F}}^{2}\left(\mathcal{E}\right)=\left\{\sum_{n\in \N} X_n 1_{A_n} ; \: X_n \in L^{2}\left(\mathcal{E}\right) \right\}
\end{equation}
\begin{equation}
\label{eq3}
\begin{array}{cc}
\left\Vert X \mid \mathcal{F} \right\Vert_{2}^2:=\sum_{n\in \N} \frac{\Vert X 1_{A_n}\Vert_2^2}{1/2^n}1_{A_n}
 & \textnormal{ for } X\in L_{\mathcal{F}}^{2}\left(\mathcal{E}\right).
\end{array}
\end{equation}

Now, we will define a net in $L_{\mathcal{F}}^{2}\left(\mathcal{E}\right)$ indexed with the set $\N ^ \N$. Given $\{ n_k\}_{k\in\N}$ we define
\[
\begin{array}{cc}
X_{\{ n_k\}_{k\in\N}} (t):=\sum_{k\in\N} 1_{A_k} \sgn [\sin 2\pi (2^{k+n_k} t - 1)] & \textnormal{ for } t\in (0,1).
\end{array}
\]

We shall show that this net converges weakly to $0$ and that $0$ is not a cluster point of $co_{L^0}\left\{X_{\{n_k\}} ; \: \{n_k\}\in \N^\N \right\}$.
~\\

Indeed, by \cite{key-12} or \cite{key-10}, we know that for each $X^* \in E^*$, there exists $Y\in L_{\mathcal{F}}^{2}\left(\mathcal{E}\right)$ such that 
\[
\left\langle X, X^* \right\rangle=\E\left[X{Y}|\mathcal{F}\right]
\]

But, for every $Y\in L_{\mathcal{F}}^{2}\left(\mathcal{E}\right)$ 
\[
\left| \E\left[X_{\{n_k\}}{Y}|\mathcal{F}\right] \right|=\underset{k\in\N}\sum{\frac{\E[1_{A_k}{Y}X_{n_k}]}{\PP(A_k)}1_{A_k}}=
\]

\[
=\underset{k\in\N}\sum{\frac{\left| \int_{1/2^k}^{1/2^{k-1}}{{Y}\sgn[\sin {2\pi (2^{k+{n_k}} s - 1  )}]} d s \right|}{1/{2^k}}1_{[\frac{1}{2^k},\frac{1}{2^{k-1}})}},
\]

and it can be proved that the later converges to $0$ on $A_k$ for $k=1,2,...$. Hence, since $Y$ is arbitrary we conclude that $X_{\{n_k\}}$ converges weakly to $0$.
~\\

On the other hand, let us see that $0$ is not a cluster point of $co_{L^0}\left\{X_{\{n_k\}} ; \: \{n_k\}\in \N^\N \right\}$.
~\\

Indeed, given $Y\in co_{L^0}\left\{X_{\{n_k\}} ; \: \{n_k\}\in \N^\N \right\}$. We have that $Y$ will be as follows 
\[
Y=\sum_{k\in \N} 1_{A_k} \sum_{i=1}^N{\alpha_k^i \sgn[\sin {2\pi (2^{k+{n_k}} s - 1  )}]} 
\]

with $N\in \N$, $\alpha_k^i\in\R$ and with $\sum_{i=1}^N{\alpha_k^i}=1$ for all $k\in\N$.

In addition, we have that it can be proved that
\[
\left\Vert 1_{A_k} \sum_{i=1}^N{\alpha_k^i \sgn[\sin {2\pi (2^{k+{n_k}} s - 1  )}]} \right\Vert_2^2 \geq 
\]
\[
\geq \PP\left(A_k \cap \bigcap_{j=1}^N \left({\sgn[\sin {2\pi (2^{k+{n_k}} s - 1  )}]=1}\right) \right) \geq \frac{1}{2^{N+K-1}}.
\]

Therefore, by using \ref{eq3}
\[
\left\Vert Y \mid \mathcal{F} \right\Vert_{2}^2 = \sum_{k\in \N}{\frac{\left\Vert 1_{A_k} \sum_{i=1}^N{\alpha_k^i \sgn[\sin {2\pi (2^{k+{n_k}} s - 1  )}]} \right\Vert_2^2}{1/2^k}1_{A_k}}\geq
\]
\[
=\sum_{k\in \N} \frac{1/2^{N+k-1}}{1/2^k} 1_{A_k}= \sum_{k\in \N} \frac{1}{2^{N-1}} 1_{A_k}.
\]

But, taking $\varepsilon:=\sum_{k\in \N} \frac{1}{2^{k}} 1_{A_k} \in L^0_{++}(\mathcal{F})$ it is clear that for each $Y\in co_{L^0}\left\{X_{\{n_k\}} ; \: \{n_k\}\in \N^\N \right\}$ there exists $A\in\mathcal{F}$ with $\PP(A)>0$ such that
\[
\begin{array}{cc}
\left\Vert Y \mid \mathcal{F} \right\Vert_{2}^2 > \varepsilon & \textnormal{ on } A.
\end{array}
\]
Hence, $0$ cannot be a cluster point of $co_{L^0}\left\{X_{\{n_k\}} ; \: \{n_k\}\in \N^\N \right\}$ as could be expected considering the classical Mazur's lemma. 
\end{example}

\end{document}